\newcommand{\ifequals}[3]{\ifthenelse{\equal{#1}{#2}}{#3}{}}
\newcommand{\case}[2]{#1 #2} 
\newenvironment{switch}[1]{\renewcommand{\case}{\ifequals{#1}}}{}
\newcommand{\letterOrig}[1]{
\begin{switch}{#1}
\case{0}{ \begin{array}{|c|c|c|} \cline{1-3}  \multicolumn{1}{|c}{a} & \multicolumn{1}{c}{b} & \multicolumn{1}{c|}{c} \\  \cline{1-3}  \multicolumn{1}{|c}{c} & \multicolumn{1}{c|}{a} \\  \cline{1-2} \end{array} }
\case{1}{ \begin{array}{|c|c|c|} \cline{2-2}  \multicolumn{1}{c|}{} & a \\  \cline{1-3}  \multicolumn{1}{|c}{c} & \multicolumn{1}{c}{b} & \multicolumn{1}{c|}{a} \\  \cline{1-3} \end{array} }
\case{2}{ \begin{array}{|c|c|c|} \cline{1-2}  \multicolumn{1}{|c}{a} & \multicolumn{1}{c|}{c} \\  \cline{1-3}  \multicolumn{1}{c|}{} & \multicolumn{1}{|c}{c} & \multicolumn{1}{c|}{a} \\  \cline{2-3} \end{array} }
\case{3}{ \begin{array}{|c|c|c|} \cline{1-3}  \multicolumn{1}{|c}{a} & \multicolumn{1}{c}{b} & \multicolumn{1}{c|}{c} \\  \cline{1-3}  \multicolumn{1}{c|}{} & a \\  \cline{2-2} \end{array} }
\case{4}{ \begin{array}{|c|c|c|c|} \cline{2-4}  \multicolumn{1}{c|}{} & \multicolumn{1}{|c}{a} & \multicolumn{1}{c}{b} & \multicolumn{1}{c|}{c} \\  \cline{1-4}  \multicolumn{1}{|c}{c} & \multicolumn{1}{c}{b} & \multicolumn{1}{c|}{a} \\  \cline{1-3} \end{array} }
\case{5}{ \begin{array}{|c|c|c|} \cline{2-3}  \multicolumn{1}{c|}{} & \multicolumn{1}{|c}{a} & \multicolumn{1}{c|}{c} \\  \cline{1-3}  \multicolumn{1}{|c}{c} & \multicolumn{1}{c|}{a} \\  \cline{1-2} \end{array} }
\case{6}{ \begin{array}{|c|c|c|c|} \cline{1-3}  \multicolumn{1}{|c}{a} & \multicolumn{1}{c}{b} & \multicolumn{1}{c|}{c} \\  \cline{1-4}  \multicolumn{1}{c|}{} & \multicolumn{1}{|c}{c} & \multicolumn{1}{c}{b} & \multicolumn{1}{c|}{a} \\  \cline{2-4} \end{array} }
\case{7}{ \begin{array}{|c|c|} \cline{2-2}  \multicolumn{1}{c|}{} & a \\  \cline{1-2}  \multicolumn{1}{|c}{c} & \multicolumn{1}{c|}{a} \\  \cline{1-2} \end{array} }
\case{8}{ \begin{array}{|c|c|} \cline{1-2}  \multicolumn{1}{|c}{a} & \multicolumn{1}{c|}{c} \\  \cline{1-2}  a \\  \cline{1-1} \end{array} }
\case{01}{ \begin{array}{|c|c|c|c|c|} \cline{1-4}  \multicolumn{1}{|c}{a} & \multicolumn{1}{c}{b} & \multicolumn{1}{c|}{c} &  a \\  \cline{1-5}  \multicolumn{1}{|c}{c} & \multicolumn{1}{c|}{a} &  \multicolumn{1}{|c}{c} & \multicolumn{1}{c}{b} & \multicolumn{1}{c|}{a} \\  \cline{1-5} \end{array} }
\case{66}{ \begin{array}{|c|c|c|c|c|c|c|} \cline{1-6}  \multicolumn{1}{|c}{a} & \multicolumn{1}{c}{b} & \multicolumn{1}{c|}{c} &  \multicolumn{1}{|c}{a} & \multicolumn{1}{c}{b} & \multicolumn{1}{c|}{c} \\  \cline{1-7}  \multicolumn{1}{c|}{} & \multicolumn{1}{|c}{c} & \multicolumn{1}{c}{b} & \multicolumn{1}{c|}{a} &  \multicolumn{1}{|c}{c} & \multicolumn{1}{c}{b} & \multicolumn{1}{c|}{a} \\  \cline{2-7} \end{array} }
\case{12}{ \begin{array}{|c|c|c|c|c|} \cline{2-4}  \multicolumn{1}{c|}{} & a &  \multicolumn{1}{|c}{a} & \multicolumn{1}{c|}{c} \\  \cline{1-5}  \multicolumn{1}{|c}{c} & \multicolumn{1}{c}{b} & \multicolumn{1}{c|}{a} &  \multicolumn{1}{|c}{c} & \multicolumn{1}{c|}{a} \\  \cline{1-5} \end{array} }
\case{31}{ \begin{array}{|c|c|c|c|c|} \cline{1-4}  \multicolumn{1}{|c}{a} & \multicolumn{1}{c}{b} & \multicolumn{1}{c|}{c} &  a \\  \cline{1-5}  \multicolumn{1}{c|}{} & a &  \multicolumn{1}{|c}{c} & \multicolumn{1}{c}{b} & \multicolumn{1}{c|}{a} \\  \cline{2-5} \end{array} }
\case{23}{ \begin{array}{|c|c|c|c|c|} \cline{1-5}  \multicolumn{1}{|c}{a} & \multicolumn{1}{c|}{c} &  \multicolumn{1}{|c}{a} & \multicolumn{1}{c}{b} & \multicolumn{1}{c|}{c} \\  \cline{1-5}  \multicolumn{1}{c|}{} & \multicolumn{1}{|c}{c} & \multicolumn{1}{c|}{a} &  a \\  \cline{2-4} \end{array} }
\case{34}{ \begin{array}{|c|c|c|c|c|c|} \cline{1-6}  \multicolumn{1}{|c}{a} & \multicolumn{1}{c}{b} & \multicolumn{1}{c|}{c} &  \multicolumn{1}{|c}{a} & \multicolumn{1}{c}{b} & \multicolumn{1}{c|}{c} \\  \cline{1-6}  \multicolumn{1}{c|}{} & a &  \multicolumn{1}{|c}{c} & \multicolumn{1}{c}{b} & \multicolumn{1}{c|}{a} \\  \cline{2-5} \end{array} }
\case{26}{ \begin{array}{|c|c|c|c|c|c|} \cline{1-5}  \multicolumn{1}{|c}{a} & \multicolumn{1}{c|}{c} &  \multicolumn{1}{|c}{a} & \multicolumn{1}{c}{b} & \multicolumn{1}{c|}{c} \\  \cline{1-6}  \multicolumn{1}{c|}{} & \multicolumn{1}{|c}{c} & \multicolumn{1}{c|}{a} &  \multicolumn{1}{|c}{c} & \multicolumn{1}{c}{b} & \multicolumn{1}{c|}{a} \\  \cline{2-6} \end{array} }
\case{45}{ \begin{array}{|c|c|c|c|c|c|} \cline{2-6}  \multicolumn{1}{c|}{} & \multicolumn{1}{|c}{a} & \multicolumn{1}{c}{b} & \multicolumn{1}{c|}{c} &  \multicolumn{1}{|c}{a} & \multicolumn{1}{c|}{c} \\  \cline{1-6}  \multicolumn{1}{|c}{c} & \multicolumn{1}{c}{b} & \multicolumn{1}{c|}{a} &  \multicolumn{1}{|c}{c} & \multicolumn{1}{c|}{a} \\  \cline{1-5} \end{array} }
\case{62}{ \begin{array}{|c|c|c|c|c|c|} \cline{1-5}  \multicolumn{1}{|c}{a} & \multicolumn{1}{c}{b} & \multicolumn{1}{c|}{c} &  \multicolumn{1}{|c}{a} & \multicolumn{1}{c|}{c} \\  \cline{1-6}  \multicolumn{1}{c|}{} & \multicolumn{1}{|c}{c} & \multicolumn{1}{c}{b} & \multicolumn{1}{c|}{a} &  \multicolumn{1}{|c}{c} & \multicolumn{1}{c|}{a} \\  \cline{2-6} \end{array} }
\case{47}{ \begin{array}{|c|c|c|c|c|} \cline{2-5}  \multicolumn{1}{c|}{} & \multicolumn{1}{|c}{a} & \multicolumn{1}{c}{b} & \multicolumn{1}{c|}{c} &  a \\  \cline{1-5}  \multicolumn{1}{|c}{c} & \multicolumn{1}{c}{b} & \multicolumn{1}{c|}{a} &  \multicolumn{1}{|c}{c} & \multicolumn{1}{c|}{a} \\  \cline{1-5} \end{array} }
\case{84}{ \begin{array}{|c|c|c|c|c|} \cline{1-5}  \multicolumn{1}{|c}{a} & \multicolumn{1}{c|}{c} &  \multicolumn{1}{|c}{a} & \multicolumn{1}{c}{b} & \multicolumn{1}{c|}{c} \\  \cline{1-5}  a &  \multicolumn{1}{|c}{c} & \multicolumn{1}{c}{b} & \multicolumn{1}{c|}{a} \\  \cline{1-4} \end{array} }
\case{78}{ \begin{array}{|c|c|c|c|} \cline{2-4}  \multicolumn{1}{c|}{} & a &  \multicolumn{1}{|c}{a} & \multicolumn{1}{c|}{c} \\  \cline{1-4}  \multicolumn{1}{|c}{c} & \multicolumn{1}{c|}{a} &  a \\  \cline{1-3} \end{array} }
\case{54}{ \begin{array}{|c|c|c|c|c|c|} \cline{2-6}  \multicolumn{1}{c|}{} & \multicolumn{1}{|c}{a} & \multicolumn{1}{c|}{c} &  \multicolumn{1}{|c}{a} & \multicolumn{1}{c}{b} & \multicolumn{1}{c|}{c} \\  \cline{1-6}  \multicolumn{1}{|c}{c} & \multicolumn{1}{c|}{a} &  \multicolumn{1}{|c}{c} & \multicolumn{1}{c}{b} & \multicolumn{1}{c|}{a} \\  \cline{1-5} \end{array} }
\case{41}{ \begin{array}{|c|c|c|c|c|c|} \cline{2-5}  \multicolumn{1}{c|}{} & \multicolumn{1}{|c}{a} & \multicolumn{1}{c}{b} & \multicolumn{1}{c|}{c} &  a \\  \cline{1-6}  \multicolumn{1}{|c}{c} & \multicolumn{1}{c}{b} & \multicolumn{1}{c|}{a} &  \multicolumn{1}{|c}{c} & \multicolumn{1}{c}{b} & \multicolumn{1}{c|}{a} \\  \cline{1-6} \end{array} }
\case{012}{ \begin{array}{|c|c|c|c|c|c|c|} \cline{1-6}  \multicolumn{1}{|c}{a} & \multicolumn{1}{c}{b} & \multicolumn{1}{c|}{c} &  a &  \multicolumn{1}{|c}{a} & \multicolumn{1}{c|}{c} \\  \cline{1-7}  \multicolumn{1}{|c}{c} & \multicolumn{1}{c|}{a} &  \multicolumn{1}{|c}{c} & \multicolumn{1}{c}{b} & \multicolumn{1}{c|}{a} &  \multicolumn{1}{|c}{c} & \multicolumn{1}{c|}{a} \\  \cline{1-7} \end{array} }
\case{0123454}{ \begin{array}{|c|c|c|c|c|c|c|c|c|c|c|c|c|c|c|c|c|} \cline{1-17}  \multicolumn{1}{|c}{a} & \multicolumn{1}{c}{b} & \multicolumn{1}{c|}{c} &  a &  \multicolumn{1}{|c}{a} & \multicolumn{1}{c|}{c} &  \multicolumn{1}{|c}{a} & \multicolumn{1}{c}{b} & \multicolumn{1}{c|}{c} &  \multicolumn{1}{|c}{a} & \multicolumn{1}{c}{b} & \multicolumn{1}{c|}{c} &  \multicolumn{1}{|c}{a} & \multicolumn{1}{c|}{c} &  \multicolumn{1}{|c}{a} & \multicolumn{1}{c}{b} & \multicolumn{1}{c|}{c} \\  \cline{1-17}  \multicolumn{1}{|c}{c} & \multicolumn{1}{c|}{a} &  \multicolumn{1}{|c}{c} & \multicolumn{1}{c}{b} & \multicolumn{1}{c|}{a} &  \multicolumn{1}{|c}{c} & \multicolumn{1}{c|}{a} &  a &  \multicolumn{1}{|c}{c} & \multicolumn{1}{c}{b} & \multicolumn{1}{c|}{a} &  \multicolumn{1}{|c}{c} & \multicolumn{1}{c|}{a} &  \multicolumn{1}{|c}{c} & \multicolumn{1}{c}{b} & \multicolumn{1}{c|}{a} \\  \cline{1-16} \end{array} }
\end{switch}}
\newcommand{\letterzuj}[1]{
\begin{switch}{#1}
\case{0}{ \begin{array}{|c|c|c|c|} \cline{1-4}  \multicolumn{1}{|c}{a} & \multicolumn{1}{c}{b} & \multicolumn{1}{c}{a} & \multicolumn{1}{c|}{c} \\  \cline{1-4}  \multicolumn{1}{|c}{a} & \multicolumn{1}{c}{c} & \multicolumn{1}{c}{a} & \multicolumn{1}{c|}{b} \\  \cline{1-4} \end{array} }
\case{1}{ \begin{array}{|c|c|c|} \cline{1-3}  \multicolumn{1}{|c}{a} & \multicolumn{1}{c}{b} & \multicolumn{1}{c|}{a} \\  \cline{1-3}  \multicolumn{1}{|c}{a} & \multicolumn{1}{c|}{b} \\  \cline{1-2} \end{array} }
\case{2}{ \begin{array}{|c|c|c|c|c|} \cline{2-5}  \multicolumn{1}{c|}{} & \multicolumn{1}{|c}{a} & \multicolumn{1}{c}{b} & \multicolumn{1}{c}{a} & \multicolumn{1}{c|}{c} \\  \cline{1-5}  \multicolumn{1}{|c}{a} & \multicolumn{1}{c}{c} & \multicolumn{1}{c}{a} & \multicolumn{1}{c|}{b} \\  \cline{1-4} \end{array} }
\case{3}{ \begin{array}{|c|c|c|} \cline{2-3}  \multicolumn{1}{c|}{} & \multicolumn{1}{|c}{a} & \multicolumn{1}{c|}{b} \\  \cline{1-3}  \multicolumn{1}{|c}{a} & \multicolumn{1}{c}{a} & \multicolumn{1}{c|}{b} \\  \cline{1-3} \end{array} }
\case{4}{ \begin{array}{|c|c|c|} \cline{1-3}  \multicolumn{1}{|c}{a} & \multicolumn{1}{c}{b} & \multicolumn{1}{c|}{a} \\  \cline{1-3}  \multicolumn{1}{|c}{a} & \multicolumn{1}{c}{a} & \multicolumn{1}{c|}{b} \\  \cline{1-3} \end{array} }
\case{5}{ \begin{array}{|c|c|c|c|} \cline{1-4}  \multicolumn{1}{|c}{a} & \multicolumn{1}{c}{b} & \multicolumn{1}{c}{a} & \multicolumn{1}{c|}{c} \\  \cline{1-4}  \multicolumn{1}{|c}{a} & \multicolumn{1}{c|}{b} \\  \cline{1-2} \end{array} }
\case{6}{ \begin{array}{|c|c|c|c|c|} \cline{3-5}  \multicolumn{2}{c|}{} & \multicolumn{1}{|c}{a} & \multicolumn{1}{c}{b} & \multicolumn{1}{c|}{a} \\  \cline{1-5}  \multicolumn{1}{|c}{a} & \multicolumn{1}{c}{c} & \multicolumn{1}{c}{a} & \multicolumn{1}{c|}{b} \\  \cline{1-4} \end{array} }
\case{7}{ \begin{array}{|c|c|c|c|c|} \cline{2-5}  \multicolumn{1}{c|}{} & \multicolumn{1}{|c}{a} & \multicolumn{1}{c}{b} & \multicolumn{1}{c}{a} & \multicolumn{1}{c|}{c} \\  \cline{1-5}  \multicolumn{1}{|c}{a} & \multicolumn{1}{c}{a} & \multicolumn{1}{c|}{b} \\  \cline{1-3} \end{array} }
\case{8}{ \begin{array}{|c|c|c|c|} \cline{3-4}  \multicolumn{2}{c|}{} & \multicolumn{1}{|c}{a} & \multicolumn{1}{c|}{b} \\  \cline{1-4}  \multicolumn{1}{|c}{a} & \multicolumn{1}{c}{c} & \multicolumn{1}{c}{a} & \multicolumn{1}{c|}{b} \\  \cline{1-4} \end{array} }
\case{9}{ \begin{array}{|c|c|c|c|} \cline{1-3}  \multicolumn{1}{|c}{a} & \multicolumn{1}{c}{b} & \multicolumn{1}{c|}{a} \\  \cline{1-4}  \multicolumn{1}{|c}{a} & \multicolumn{1}{c}{c} & \multicolumn{1}{c}{a} & \multicolumn{1}{c|}{b} \\  \cline{1-4} \end{array} }
\case{10}{ \begin{array}{|c|c|c|c|} \cline{1-4}  \multicolumn{1}{|c}{a} & \multicolumn{1}{c}{b} & \multicolumn{1}{c}{a} & \multicolumn{1}{c|}{c} \\  \cline{1-4}  \multicolumn{1}{c|}{} & \multicolumn{1}{|c}{a} & \multicolumn{1}{c}{a} & \multicolumn{1}{c|}{b} \\  \cline{2-4} \end{array} }
\case{11}{ \begin{array}{|c|c|} \cline{1-2}  \multicolumn{1}{|c}{a} & \multicolumn{1}{c|}{b} \\  \cline{1-2}  \multicolumn{1}{|c}{a} & \multicolumn{1}{c|}{b} \\  \cline{1-2} \end{array} }
\end{switch}}
\newtheorem{definition}{Definition}
\newtheorem{proposition}[definition]{Proposition}
\newtheorem*{conjecture*}{Conjecture}
\newtheorem{theorem}[definition]{Theorem}
\theoremstyle{remark}
\newtheorem{example}[definition]{Example}
\newcommand{\N}{\mathbb{N}}
\newcommand{\Z}{\mathbb{Z}}
\newcommand{\A}{\mathcal A}
\newcommand{\B}{\mathcal B}
\newcommand{\C}{\mathcal C}
\newcommand{\bu}{{\bf u}}
\newcommand{\bw}{{\bf w}}
\newcommand{\bz}{{\bf z}}
\renewcommand{\L}{\mathcal L}
\DeclareMathOperator\BA{B} 
\newcommand{\parikh}{\mathbf{1}}
\crefname{theorem}{theorem}{theorems}
\crefname{corollary}{corollary}{corollaries}
\crefname{example}{example}{examples}
\crefname{lemma}{lemma}{lemmas}
\crefname{proposition}{proposition}{propositions}
\crefname{definition}{definition}{definitions}
\crefname{observation}{observation}{observations}
\begin{document}

\title{On a conjecture about the absence of an initial balanced pair for Pisot substitutions}

\author{V\'{\i}ctor F. Sirvent%
\thanks{Electronic address: \texttt{vsirvent@usb.ve}}}
\affil{\footnotesize Departamento de Matem\'aticas, Universidad Sim\'on Bol\'{\i}var, Caracas, Venezuela}

\author{Štěpán Starosta%
\thanks{Electronic address: \texttt{stepan.starosta@fit.cvut.cz}}}
\affil{\footnotesize Faculty of Information Technology, Czech Technical University in Prague, Czech Republic}

\date{\today}

\maketitle

 \begin{abstract}
 \noindent
 Sellami and Sirvent~(\cite{sellami-sirvent}) conjectured that the balanced pair algorithm fails for the following pair of Pisot substitutions:
 \[
\varphi_0: \begin{array}{l}
a \mapsto abc\\
b \mapsto a\\
c \mapsto ac
\end{array}
\quad \text{ and } \quad
\varphi_1:
\begin{array}{l}
a \mapsto cba\\
b \mapsto a\\
c \mapsto ca
\end{array}.
\]

 The conjecture stated the balanced pair algorithm fails because there is no initial balanced pair.
 In the present note we prove this conjecture using a method based on simultaneous coding of the pair of the fixed points of the morphisms.
 \end{abstract}

\ifdraft{
   \listoftodos
}

\section{Introduction}

Morphisms and substitutions on words (over a finite alphabet) appear in different contexts of mathematics and theoretical computer science.
In particular in dynamical systems, the substitution dynamical systems are an important class
of symbolic dynamical systems. They have been studied extensively, for instance see~\cite{queffelec,CANT} and references therein.

We first recall some notions from combinatorics on words.
An {\em alphabet} $\A$ is a finite set and its elements are {\em letters}.
The elements of $\A^n$, for $n\geq 1$ are called {\em words on} $\A$, and $\A^0$ is the set formed by the empty word, $\varepsilon$.
Let $\A^*=\cup_{n\geq 0}\A^{n}$ be  the set of finite words on $\A$.
Let $\B$ be an alphabet.
A {\em morphism} $\varphi$ on $\A$ is a map $\varphi: \A \to \B^*$.
By setting $\varphi(uv)=\varphi(u)\varphi(v)$ for $u,v\in\A^*$, the morphism $\varphi$ is extended to $\A^*$.
Hence, the map $\varphi$ is extended in a straightforward manner to $\A^{\N}$, i.e. the set of one-sided infinite sequences over $\A$.
If $\varphi$ is an endomorphism, i.e. $\B=\A$, then it is also called a {\em substitution}.

An important question is to describe the common dynamics for two morphisms $\varphi_0$ and $\varphi_1$ that have the same incidence matrix. The {\em incidence matrix} of $\varphi$ is the matrix $M_{\varphi}=(m_{ij})$,
where $m_{ij}$ is the number of occurrences of the letter $i$ in the word $\varphi(j)$.
This problem has been addressed in~\cite{sirvent,sing-sirvent,sellami}.
The technique used in~\cite{sel0,sellami} is based on the balanced pair algorithm for two substitutions having the same incidence matrix.
This algorithm is a variation of the classical balanced pair algorithm introduced by Livshits~\cite{livshits} in the context of the Pisot conjecture, for more details see for instance~\cite{sirvent-solomyak}.

We say that a substitution is {\em Pisot} if the dominant   eigenvalue of the incidence matrix is a Pisot number.
In the case of Pisot substitutions, we can associate a geometrical object to the substitution, called Rauzy fractal~(\emph{cf.}~\cite{rauzy}).
Rauzy fractals play a fundamental role in the study of the Pisot conjecture.
The topological, geometrical and dynamical properties of the Rauzy fractals have been analyzed  extensively,  for instance see the recent survey~\cite{CANT} and references therein.
The study of the common dynamics of two Pisot morphisms is related to the intersection of their respective Rauzy fractals.
The balanced pair algorithm gives answers to this matter.

We introduce some notation that we use throughout the article:
For a word $u = u_0u_1 \cdots u_n \in \A^{n+1}$,
we set $u[i] = u_i$ for $0\leq i < n$.
For $\ell \leq n$, $u[{:}\ell] = u_0u_1\cdots u_{\ell-1} \in \A^\ell$.
The length of $u$ is denoted $|u|$, i.e., $|u| = n+1$.

Let $a \in \A$.
The number of occurrences of the letter $a$ in a finite word $u$ is denoted $|u|_a$.
We have $|u| = \sum_{a \in \A} |u|_a$.
Assuming that $\A$ is ordered, i.e., $\A = \{a_1,a_2,\ldots, a_k\}$, we set
\[
\parikh(u) = \left( |u|_{a_1} , |u|_{a_2}, \ldots, |u|_{a_k} \right) \in \N^k.
\]

Let $u$ and $v$ be two finite words, we say that $\left( u, v \right) $ is a \emph{balanced pair } if the number of occurrences of each letter of the alphabet in $u$ and $v$ are the same, i.e., $\parikh(u) = \parikh(v)$.
 We say that a \emph{balanced pair $\left( u, v \right) $ is minimal}  if
 $\parikh(u[{:}\ell])\neq\parikh(v[{:}\ell])$, for $1\leq\ell < |u|$.

Let $\varphi_0$ and $\varphi_1$ be two morphisms having the same incidence matrix.
 We assume that the morphisms are \emph{primitive}, i.e. there exists an integer $k$ such that all the entries of the matrix $M_{\varphi_0}^k$ are positive.
 Let $\bu_0$ and  $\bu_1$ be one-sided fixed points of $\varphi_0^m$ and $\varphi_1^m$, for some $m\geq 1$, i.e.
 $$
 \bu_0=\varphi_0^m(\bu_0), \quad \bu_1=\varphi_1^m(\bu_1).
 $$
 An \emph{initial balanced pair} for $(\bu_0,\bu_1)$ is a balanced pair $\left( u, v \right)$ such that $u$ and $v$ are prefixes of $\bu_0$ and $\bu_1$, respectively.

The \emph{balanced pair algorithm} for $(\bu_0,\bu_1)$ is defined as follows:
We assume that there exists an initial balanced pair $\left( u, v \right)$ for $\bu_0$ and $\bu_1$.
 We apply the morphisms $\varphi_0^m$, $\varphi_1^m$ as
 $\left( u, v \right) \mapsto \left( \varphi_0^m(u) , \varphi_1^m(v) \right)$.
 Since the morphisms have the same incidence matrix, the obtained pair $\left( \varphi_0^m(u) , \varphi_1^m(v) \right)$ is balanced.
 We decompose this balanced pair into minimal balanced pairs and we repeat this procedure for each new minimal balanced pair.
 Under the right hypothesis  the set of minimal balanced pairs is finite and the algorithm terminates (\emph{cf.}~\cite{sellami}).

The balanced pair algorithm requires that there is an initial balanced pair for the two sequences.
Sufficient conditions for Pisot morphisms to have an initial balanced pair are stated in~\cite{sellami}.
These conditions are in terms of geometrical properties of their respective Rauzy fractals.
A pair of substitutions is given in~\cite[Example 5]{sellami-sirvent} that does not satisfy these conditions.
It is conjectured that there is no initial balanced pair between the (one-sided) fixed points of the morphisms.
In the following theorem we give a positive answer to this conjecture.

\begin{theorem}\label{thm:main}
Let $\varphi_0$ and $\varphi_1$ be the morphisms over the alphabet $\left\{ a,b,c \right\}$ defined by
\begin{equation}\label{eqn:morphisms}
\varphi_0: \begin{array}{l}
a \mapsto abc\\
b \mapsto a\\
c \mapsto ac
\end{array}
\quad \text{ and } \quad
\varphi_1:
\begin{array}{l}
a \mapsto cba\\
b \mapsto a\\
c \mapsto ca
\end{array},
\end{equation}
and let $\bu_0$ and $\bu_1$ be their respective one-sided fixed points:
\begin{align*}
\bu_{0} = \varphi_0(\bu_0) = abcaacabcabcacabcaacabcaacabcacabc \ldots \\ 
\bu_{1} = \varphi_1(\bu_1)= cacbacaacbacacbacbacaacbacacbacaac \ldots 
\end{align*}
There is no initial balanced pair for $(\bu_0,\bu_1)$, i.e. for all $n>0$ we have
\begin{equation}
\parikh(\bu_0[{:}n]) \neq \parikh(\bu_1[{:}n]).
\end{equation}
\end{theorem}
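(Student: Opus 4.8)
The plan is to track the running difference of Parikh vectors
$D(n) = \parikh(\bu_0[{:}n]) - \parikh(\bu_1[{:}n]) \in \Z^3$, written $D(n)=(D_a(n),D_b(n),D_c(n))$ in the letter order $(a,b,c)$, and to show it never vanishes for $n>0$. Since both prefixes have length $n$ we have $D_a(n)+D_b(n)+D_c(n)=0$, so it suffices to keep a single coordinate away from zero; I would keep $D_c(n)<0$, i.e.\ show that the prefixes of $\bu_1$ always contain strictly more occurrences of $c$ than those of $\bu_0$. In fact I would prove the sharper invariant that $D(n)$ stays in the finite set $S=\{(1,0,-1),(0,1,-1),(1,1,-2)\}$ for every $n\ge 1$, which excludes the origin and gives the theorem. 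The conceptual reason to expect such an invariant is the reversal symmetry $\varphi_1(x)=\widetilde{\varphi_0(x)}$ holding for each letter $x$: every occurrence of $c$ produced by $\varphi_0$ sits at the \emph{end} of a block $\varphi_0(x)$ (in $abc$ and in $ac$), whereas every $c$ produced by $\varphi_1$ sits at its \emph{beginning} (in $cba$ and in $ca$). Thus $\bu_1$ systematically front-loads the letter that $\bu_0$ back-loads, and its prefixes retain a permanent lead in the count of $c$'s.

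To turn this heuristic into a proof I would set up the simultaneous coding announced in the abstract. Factor $\bu_0=\prod_{j}\varphi_0(\bu_0[j])$ and $\bu_1=\prod_{j}\varphi_1(\bu_1[j])$; these induce two partitions of the positions $\{0,1,2,\dots\}$ into blocks, and I would pass to their common refinement into \emph{overlap tiles}, each a maximal run of positions on which both the ambient $\bu_0$-block and the ambient $\bu_1$-block are constant. On each such tile the pair sequence $(\bu_0[i],\bu_1[i])$ realizes one of finitely many patterns, so a tile carries a well-defined increment to $D$. The key bookkeeping parameter is the offset between the two factorizations after $j$ blocks; because both morphisms share the length data $|\varphi_0(a)|=|\varphi_1(a)|=3$, $|\varphi_0(b)|=|\varphi_1(b)|=1$, $|\varphi_0(c)|=|\varphi_1(c)|=2$, this offset equals the linear functional $3D_a(j)+D_b(j)+2D_c(j)$ of $D(j)$, and on $S$ it takes only the values $1,-1,0$.

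The heart of the argument is a self-similarity statement: applying $(\varphi_0,\varphi_1)$ to the two fixed points refines each overlap tile into a concatenation of overlap tiles, in a way that depends only on the tile type (the letter pair together with the offset). This defines a substitution $\zeta$ on a finite alphabet of tile types whose fixed point is exactly the overlap-tile sequence and from which $(\bu_0,\bu_1)$ is read off. I expect the main obstacle to be proving that only finitely many tile types occur and that this set is closed under $\zeta$; concretely, this amounts to showing the offset stays in $\{-1,0,1\}$. The delicate point is that the offset is itself a functional of $D$, so boundedness of the offset and the invariant $D(n)\in S$ are intertwined and must be established together, by a simultaneous induction over the levels of $\zeta$ rather than separately.

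Once the finite tile alphabet and the substitution $\zeta$ are in hand, the remainder is a finite verification: for each tile type I would record the increment it contributes to $D$, and check that reading the tiles of the $\zeta$-fixed point from left to right keeps every partial sum inside $S$ (equivalently, keeps the offset in $\{-1,0,1\}$ and $D_c$ strictly negative). Because $\zeta$ is primitive and the finite check is closed under it, the partial sums over all prefixes remain in $S$; in particular $D(n)\neq 0$ for every $n>0$, which is the assertion of \Cref{thm:main}.
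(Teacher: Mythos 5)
Your proposal is correct and follows essentially the same route as the paper: your overlap tiles (letter pair plus bounded offset) are the paper's bricks, your substitution $\zeta$ on tile types is the paper's substitution $\mu$ combined with the coding $\pi$, and the intertwined induction you anticipate (boundedness of offsets together with substitutivity of the coding) is exactly what \Cref{st:orig_u_are_produced,st:orig_w_compatible} carry out with the explicit correction words $s_i$. The only cosmetic difference lies in the final finite check: you track the full Parikh difference $D(n)$ inside the set $S$, whereas the paper simply pairs each occurrence of $c$ in $\bu_0$ with a nearby occurrence of $c$ in $\bu_1$ and observes that the initial $c$ of $\bu_1$ stays unpaired, giving $\left|\bu_1[{:}n]\right|_c > \left|\bu_0[{:}n]\right|_c$ for all $n>0$.
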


The substitutions of Theorem~\ref{thm:main} are Pisot.
However we will not use any argument based on properties of Rauzy fractals or discuss any consequences related to the intersection of their respective Rauzy fractals.
The Rauzy fractals corresponding to these morphisms are depicted in~\cite[Figure 6]{sellami-sirvent}.
In the present note we shall use only a symbolic approach.

We give the proof of Theorem~\ref{thm:main} in \Cref{s:proof}.
The main idea for the proof   is the following.
We investigate the prefixes $\varphi_0(\bu_0[{:}n])$ and $\varphi_1(\bu_1[{:}n])$ at the same time.
For instance, for $n=1,2,3$ we have:
\begin{align*}
\varphi_0(\bu_0[{:}1]) &= abc, & \varphi_0(\bu_0[{:}2]) &= abca, & \varphi_0(\bu_0[{:}3]) &= abcaac, \\
\varphi_1(\bu_1[{:}1]) &= ca, & \varphi_1(\bu_1[{:}2]) &= cacba, & \varphi_1(\bu_1[{:}3]) &= cacbaca.
\end{align*}

To keep track of the letters at the same positions in the two fixed points and the images of letters at the same time, we encode the corresponding images of letters in the following manner:
\begin{align*}
\begin{array}{rl}
\varphi_0(\bu_0[{:}1]) & = \\
\varphi_1(\bu_1[{:}1]) & =
\end{array}
&
\letterOrig{0}
&
\begin{array}{rl}
\varphi_0(\bu_0[{:}2]) & = \\
\varphi_1(\bu_1[{:}2]) & =
\end{array}
&
\letterOrig{0} \
\letterOrig{1}
\\
\begin{array}{rl}
\varphi_0(\bu_0[{:}3]) & = \\
\varphi_1(\bu_1[{:}3]) & =
\end{array}
&
\letterOrig{0} \
\letterOrig{1} \
\letterOrig{2}
& &
\end{align*}
In that way, we construct a sequence which encodes $\bu_0$ and $\bu_1$ at the same time.

In the next section, we give the formal definitions and description of this construction.
\Cref{s:proof} contains a proof of \Cref{thm:main}.
In \Cref{s:comments} we discuss possible generalization of this technique of simultaneously generating and encoding  two fixed points  for a wider class of substitutions.

\section{Simultaneous coding of two fixed points} \label{sec:bricks}

In this section, we assume that $\varphi_0$ and $\varphi_1$ are two fixed substitutions of $\A$ and $\bu_0$ and $\bu_1$ are their two fixed points, respectively.
We start by giving a definition of the alphabet that shall be used for the simultaneous encoding.

\begin{definition} \label{de:brick}
Let $\A$ be an alphabet.
We set $\BA(\A) = \A \times \A \times \Z$ and call it the \emph{brick alphabet} (over $\A$).
An element of $\BA(\A)$ is called a \emph{brick}. 
The last element of a brick is called its \emph{offset}.
\end{definition}

\begin{definition}
Let $(v_0,v_1,s)$ and $(w_0,w_1,t)$ be two bricks.
We say that $(v_0,v_1,s)$ \emph{joins with} $(w_0,w_1,t)$ \emph{with respect to $(\varphi_0, \varphi_1)$} if
\[
| \varphi_0(v_0) | - s - | \varphi_1(v_1) | + t = 0.
\]
\end{definition}

In relation with the last definition and the morphisms $\varphi_0$ and $\varphi_1$, we shall depict bricks as in Introduction:
for a brick $(v_0,v_1,s)$ we place on the first row the word $\varphi_0(v_0)$ and on the second row we place $\varphi_1(v_1)$ with the offset $s$ relative to the first row.
One brick joins with another if the end of the first brick matches with the beginning of the second brick as demonstrated in the following example.

Consider $\varphi_0$ and $\varphi_1$ given in~(\ref{eqn:morphisms}).
Let $n = 1$.
Consider the brick $(a,c,0)$, depicted as
\[
\letterOrig{0},
\]
and the brick $(b,a,-1)$, depicted as
\[
\letterOrig{1}.
\]
The brick $(a,c,0)$ joins with $(b,a,-1)$ with respect to $(\varphi_0, \varphi_1)$ since
\[
|\varphi_0(a)| - 0 - |\varphi_1(c)| = 3 - 0 - 2 = 1.
\]
This fact can be visually depicted as
\[
\letterOrig{01}.
\]
The brick $(b,a,-1)$ joins with $(c,c,1)$ and we have
\[
\letterOrig{012}.
\]
The brick $(b,a,-1)$ does not join with itself:
\[
\letterOrig{1} \ \letterOrig{1}.
\]

Let us introduce a notation that allows us to retrieve the encoded sequences from sequences over a brick alphabet $\BA(\A)$.
Let $z = \left( \left( w_{0,i},w_{1,i}, k_i \right)  \right)_{i}  $ be an element of $\BA(\A)^{\N}$.
For $j\in\{0,1\}$, we set $\tau_j: \BA(\A)^\N \to \A^\N$ by $\tau_j(z) = w_{j,0} w_{j,1} w_{j,2}\ldots\in\A^{\N}$.

For any $\varphi_0$ and $\varphi_1$, we can find a sequence $\bz$ over $\BA(\A)$ in the spirit of the above example.
We construct the bricks to respect the original sequences: $\tau_i(\bz) = \bu_i$ for $i \in \{0,1\}$.
The offsets of the bricks in $\bz$ are set position by position starting from the beginning.
The first offset is $0$ and the offsets of other bricks in $\bz$ is set so that any brick in $\bz$ joins (with respect to $(\varphi_0,\varphi_1)$) with the brick immediately following it.
We say that $\bz$ \emph{simultaneously encodes} the fixed points $\bu_0$ and $\bu_1$.

Continuing the above example, we find that the sequence simultaneously encoding the fixed points of~(\ref{eqn:morphisms}) starts with
\[
\left( a,c, 0 \right) \left( b,a,-1 \right) \left( c,c,1 \right)  \ldots
\]

\section{Proof of Theorem~\ref{thm:main}}\label{s:proof}
In this section, $\varphi_0$ and $\varphi_1$ are fixed to be the two morphisms in~(\ref{eqn:morphisms}), and
 $\bu_0$ and $\bu_1$ are their respective fixed points.
The alphabet considered is $\A = \{a,b,c\}$.

Let $\mu$ be a substitution over $\C = \{0,1, \ldots, 8 \}$ given by
\[
\mu:
\begin{array}{lllll}
0 \mapsto 01, &
1 \mapsto 23, &
2 \mapsto 45, &
3 \mapsto 41, &
4 \mapsto 231, \\
5 \mapsto 26, &
6 \mapsto 478, &
7 \mapsto 2, &
8 \mapsto 66.
\end{array}
\]
Let $\bw$ denote its fixed point.
We have
\[
\bw = 012345412312623123454123454784541234541231 \ldots
\]

Inspecting $\mu$, we find the set of all its factors of length 2 easily:
\[
\L_2(\bw) =  \left\{ 01, 66, 12, 31, 23, 34, 26, 45, 62, 47, 84, 78, 54, 41 \right\}.
\]

Let $\B = \{ (a,c,0),
(b,a,-1),
(c,c,1),
(a,b,1),
(a,a,-1),
(c,c,-1),
(a,a,1),
(b,c,-1),
(c,b,0)
\} \subset \BA(\A)$
and $\pi: \C^* \to \B^*$ be the morphism determined by
\[
\pi:
\begin{array}{rlrlrlrlrl}
0 & \mapsto (a,c,0), &
1 & \mapsto (b,a,-1), &
2 & \mapsto (c,c,1), &
3 & \mapsto (a,b,1), &
4 & \mapsto (a,a,-1), \\
5 & \mapsto (c,c,-1), &
6 & \mapsto (a,a,1), &
7 & \mapsto (b,c,-1), &
8 & \mapsto (c,b,0).
\end{array}
\]

The action of $\pi$ may be depicted with respect to $(\varphi_0,\varphi_1)$ as follows:
\begin{equation} \label{eq:orig_pi_bricks}
\pi: \begin{array}{rlrlrl}
0 & \mapsto  \letterOrig{0}, &
1 & \mapsto  \letterOrig{1}, &
2 & \mapsto  \letterOrig{2},\\[2em]
3 & \mapsto  \letterOrig{3}, &
4 & \mapsto  \letterOrig{4}, &
5 & \mapsto  \letterOrig{5}, \\[2em]
6 & \mapsto  \letterOrig{6}, &
7 & \mapsto  \letterOrig{7}, &
8 & \mapsto  \letterOrig{8}.\\[2em]
\end{array}
\end{equation}

The sequence $\bw$ is constructed so that the sequence $\pi(\bw) = \pi(\bw[0])\pi(\bw[1])\pi(\bw[2])\ldots$ simultaneously encodes both $\bu_0$ and $\bu_1$.
This fact is shown by the two following propositions.
The alphabet $\C$ and the morphism $\pi$ serve only to work with the indices of the elements of $\B$ to ease the presentation. 

\begin{proposition} \label{st:orig_u_are_produced}
Let $i \in \{0,1\}$.
We have $\bu_i = \tau_i \pi (\bw)$.
\end{proposition}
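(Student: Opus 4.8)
The plan is to first unfold the definitions and thereby reduce the statement to an identity between two fixed points. Since $\pi$ sends each letter of $\C$ to a single brick and $\tau_i$ reads off the $i$-th component of each brick, the composite $\sigma_i := \tau_i \circ \pi$ is just the letter-to-letter coding $\C \to \A$ sending $n$ to the $i$-th entry of $\pi(n)$; in particular $\tau_i\pi(\bw) = \sigma_i(\bw)$, so the claim is exactly $\sigma_i(\bw) = \bu_i$. As $\varphi_i$ is primitive, it has a unique one-sided fixed point beginning with any prescribed admissible letter. Since $\bw$ begins with $0$ and $\sigma_0(0)=a$, $\sigma_1(0)=c$ match the first letters of $\bu_0$ and $\bu_1$, it will suffice to prove that $\sigma_i(\bw)$ is a fixed point of $\varphi_i$, i.e. $\varphi_i(\sigma_i(\bw)) = \sigma_i(\bw)$; the identity $\sigma_i(\bw)=\bu_i$ then follows by uniqueness.

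The naive route to $\varphi_i$-invariance from $\bw = \mu(\bw)$ would be a letterwise intertwining $\sigma_i \circ \mu = \varphi_i \circ \sigma_i$, which would give $\sigma_i(\bw)=\sigma_i(\mu(\bw))=\varphi_i(\sigma_i(\bw))$ at once. This intertwining is \emph{false}, and I expect it to be the main obstacle. For instance $\sigma_0(\mu(0)) = ab$ while $\varphi_0(\sigma_0(0)) = \varphi_0(a) = abc$; already the lengths disagree, and in general $|\mu(n)|$ and $|\varphi_i(\sigma_i(n))|$ differ by $0$ or $\pm 1$. The reason is geometric, and it is exactly why the coding is carried out over the brick alphabet $\BA(\A)$ rather than over $\A$: a brick simultaneously encodes a $\varphi_0$-block of $\bu_0$ and a $\varphi_1$-block of $\bu_1$, these two blocks have unequal widths, and so the image of an individual brick under $(\varphi_0,\varphi_1)$ cannot be retiled by bricks in isolation. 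Consecutive images share their boundaries, and this sharing is precisely what the offsets record.

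To circumvent this, I would compare the two factorizations of $\sigma_i(\bw)$ and of $\varphi_i(\sigma_i(\bw))$ induced by $\bw = \mu(\bw)$ block by block, tracking the signed length discrepancy (the carry) $\Delta_n = \sum_{k<n}\bigl(|\mu(\bw[k])| - |\varphi_i(\sigma_i(\bw[k]))|\bigr)$. The aim is to show that at every block boundary the prefixes of the two words produced so far coincide and that $\Delta_n$ stays bounded, whence the two infinite words agree. Because $\mu$ has only nine production rules and $\bw$ has only the fourteen two-letter factors listed in $\L_2(\bw)$, this should reduce to a finite synchronized verification: for each admissible transition one checks that the letters emitted by $\sigma_i\circ\mu$ on one block, once realigned by the incoming carry, match the corresponding segment of $\varphi_i(\sigma_i(\bw))$, and that the outgoing carry remains in a bounded range. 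The crux is the boundedness and consistency of this carry across boundaries; this is where the specific shape of $\mu$, which was built from the brick joining, is essential, and it dovetails with the companion proposition on joining, the offset-level statement underlying the bounded carry. Once this is in place, $\varphi_i(\sigma_i(\bw)) = \sigma_i(\bw)$, and hence $\sigma_i(\bw) = \bu_i$, follow.
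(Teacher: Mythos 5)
Your proposal is correct and takes essentially the same route as the paper: the paper also handles the failure of the naive intertwining $\tau_i\pi\mu = \varphi_i\tau_i\pi$ by introducing an explicit boundary-correction map $s_i:\C\to\A^*$ (your ``carry'') and verifying the local relation $\tau_i \pi \mu (x) s_i(x) = s_i(y) \varphi_i \tau_i \pi (x)$ for each of the fourteen transitions $yx\in\L_2(\bw)$, then propagating it by induction along prefixes of $\bw$. The only cosmetic difference is that the paper phrases the conclusion as a prefix identity between $\tau_i\pi\mu(\bw[{:}n])$ and $\varphi_i\tau_i\pi(\bw[{:}n])$ rather than explicitly invoking uniqueness of the $\varphi_i$-fixed point as you do.
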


\begin{proof}
We show that for all $n$, the word $\tau_i \pi  \mu ( \bw[{:}n] ) $ is a prefix of $\varphi_i \tau_i \pi ( \bw[{:}n] )$ (which is a prefix of $\bu_i$).
The proof will be done by induction on $n$.

In fact, we will prove a stronger statement:
there exists a mapping $s_i: \C \to \A^*$ such that for all $n$ we have
\begin{equation}
\tau_i \pi \mu ( \bw[{:}(n{+}1)] ) s_i(x) = \varphi_i \tau_i \pi ( \bw[{:}(n{+}1)] ) \label{eq:indu_x}
\end{equation}
where $x$ is the last letter of $\bw[{:}(n{+}1)]$, i.e., $\bw[{:}(n{+}1)] = \bw[{:}n]x$.

We set $s_i$ as follows:
\begin{align*}
s_0: & & 0 \mapsto c, 1 \mapsto \varepsilon, 2 \mapsto \varepsilon, 3 \mapsto c, 4 \mapsto c, 5 \mapsto c, 6 \mapsto \varepsilon, 7 \mapsto a, 8  \mapsto c, \\
s_1: & & 0 \mapsto \varepsilon, 1 \mapsto a, 2 \mapsto a, 3 \mapsto \varepsilon, 4 \mapsto \varepsilon, 5 \mapsto \varepsilon, 6 \mapsto a, 7 \mapsto a, 8  \mapsto \varepsilon.
\end{align*}
Taking all $yx \in \L_2(\bw) =  \left\{ 01, 66, 12, 31, 23, 34, 26, 45, 62, 47, 84, 78, 54, 41 \right\}$, we may verify that
\begin{equation} \label{eq:indux_ro}
\tau_i \pi \mu (x) s_i(x) = s_i(y) \varphi_i \tau_i \pi (x).
\end{equation}

To show \eqref{eq:indu_x} we proceed by induction on $n$.
The claim is verified for $n = 0$ ($\bw[{:}1] = \bw[0] = 0$):
\begin{align*}
\tau_0 \pi \mu ( 0 ) s_0(0) & = abc = \varphi_0 \tau_0 \pi (0), \\
\tau_1 \pi \mu ( 0 ) s_1(0) & = ca = \varphi_1 \tau_1 \pi (0).
\end{align*}
Assume \eqref{eq:indu_x} holds for $n \geq 0$.
Let $\bw[{:}(n{+}2)] = \bw[{:}n]yx$, i.e., $y$ and $x$ be the last two letters of $\bw[{:}(n{+}2)]$.
We have
\[
\tau_i \pi \mu ( \bw[{:}(n{+}2)] ) s_i(x) = \tau_i \pi \mu ( \bw[{:}(n{+}1)] ) \tau_i \pi \mu (x) s_i(x) = \tau_i \pi \mu ( \bw[{:}(n{+}1)] ) s_i(y) \varphi_i \tau_i \pi (x)
\]
where the last equality follows from \eqref{eq:indux_ro}.
Since by using the induction assumption we have
\[
\tau_i \pi \mu ( \bw[{:}(n{+}1)] ) s_i(y) \varphi_i \tau_i \pi (x) = \varphi_i \tau_i \pi ( \bw[{:}(n{+}1)] ) \varphi_i \tau_i \pi (x) =
\varphi_i \tau_i \pi ( \bw[{:}(n{+}2)] ),
\]
we conclude that
\[
\tau_i \pi \mu ( \bw[{:}(n{+}2)] ) s_i(x) = \varphi_i \tau_i \pi ( \bw[{:}(n{+}2)] ),
\]
which finishes the induction step and \eqref{eq:indu_x} is proven.
\end{proof}

The following claim states that all two consecutive bricks of $\pi(\bw)$ join one with another.

\begin{proposition} \label{st:orig_w_compatible}
If $b_0b_1 \in \L_2(\pi(\bw))$, then $b_0$ joins with $b_1$ with respect to $(\varphi_0,\varphi_1)$.
\end{proposition}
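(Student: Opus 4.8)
The plan is to reduce the statement to a finite verification. Since $\pi$ is a letter-to-letter (coding) morphism—each letter of $\C$ is sent to a single brick of $\B$—every length-$2$ factor of $\pi(\bw)$ has the form $\pi(\bw[k])\pi(\bw[k{+}1]) = \pi(y)\pi(x)$, where $yx = \bw[k]\bw[k{+}1]$ is a length-$2$ factor of $\bw$. Consequently
\[
\L_2(\pi(\bw)) = \{ \pi(y)\pi(x) : yx \in \L_2(\bw) \},
\]
and it suffices to show that $\pi(y)$ joins with $\pi(x)$ for each of the fourteen factors $yx$ in the already determined set $\L_2(\bw) = \{ 01, 66, 12, 31, 23, 34, 26, 45, 62, 47, 84, 78, 54, 41 \}$.

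First I would record the relevant lengths, namely $|\varphi_0(a)| = |\varphi_1(a)| = 3$, $|\varphi_0(b)| = |\varphi_1(b)| = 1$, and $|\varphi_0(c)| = |\varphi_1(c)| = 2$, together with the nine bricks $\pi(0), \ldots, \pi(8)$ listed above. Then, for each factor $yx \in \L_2(\bw)$, writing $\pi(y) = (v_0,v_1,s)$ and $\pi(x) = (w_0,w_1,t)$, I would evaluate the quantity $|\varphi_0(v_0)| - s - |\varphi_1(v_1)| + t$ and confirm that it vanishes. For instance, for $yx = 01$ we have $\pi(0) = (a,c,0)$ and $\pi(1) = (b,a,-1)$, giving $3 - 0 - 2 + (-1) = 0$; for $yx = 12$ we have $\pi(1) = (b,a,-1)$ and $\pi(2) = (c,c,1)$, giving $1 - (-1) - 3 + 1 = 0$. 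The remaining twelve cases are checked in exactly the same way.

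This is a routine arithmetic check over a finite explicit list, so there is no genuine obstacle. The only point requiring care is the reduction in the first step, which rests on $\pi$ being $1$-uniform, so that length-$2$ factors of $\pi(\bw)$ are in bijection with length-$2$ factors of $\bw$. In fact, the offsets carried by the bricks of $\B$ were defined precisely so that this joining condition holds along consecutive letters; once the list $\L_2(\bw)$ is known, the verification is forced to succeed, and together with \Cref{st:orig_u_are_produced} this confirms that $\pi(\bw)$ simultaneously encodes $\bu_0$ and $\bu_1$.
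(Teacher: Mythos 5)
Your proposal is correct and takes essentially the same approach as the paper: both reduce the statement to a finite verification over the fourteen elements of $\L_2(\bw)$, the paper checking the joining condition graphically (no gaps or overlaps in the brick depictions of $\pi(yx)$) where you evaluate the defining arithmetic identity $|\varphi_0(v_0)| - s - |\varphi_1(v_1)| + t = 0$ directly. Your explicit remark that the reduction rests on $\pi$ being $1$-uniform is a point the paper leaves implicit, but it is the same argument.
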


\begin{proof}
Checking each $b_0b_1 \in \L_2(\pi(\bw))$, we find that $b_0$ joins with $b_1$ with respect to $(\varphi_0,\varphi_1)$.
Indeed, using the graphical view \eqref{eq:orig_pi_bricks} we obtain
\begin{equation} \label{eq:orig_piL2}
\begin{aligned}
\pi(01) & = \letterOrig{01} &
\pi(66) & = \letterOrig{66} &
\pi(12) & = \letterOrig{12} \\
\pi(31) & = \letterOrig{31} &
\pi(23) & = \letterOrig{23} &
\pi(34) & = \letterOrig{34} \\
\pi(26) & = \letterOrig{26} &
\pi(45) & = \letterOrig{45} &
\pi(62) & = \letterOrig{62} \\
\pi(47) & = \letterOrig{47} &
\pi(84) & = \letterOrig{84} &
\pi(78) & = \letterOrig{78} \\
\pi(54) & = \letterOrig{54} &
\pi(41) & = \letterOrig{41} &
\end{aligned}
\end{equation}
and the proof is concluded by checking that there are no gaps or overlaps between the two bricks depicted with respect to $(\varphi_0,\varphi_1)$.
\end{proof}

We are set for proving Theorem~\ref{thm:main}.

\begin{proof}[Proof of Theorem~\ref{thm:main}]
We will prove that for all $n > 0$ we have $\left |\bu_0[{:}n] \right |_c < \left |\bu_1[{:}n] \right |_c$.

We shall depict  $\pi(\bw)$ with respect to $\varphi_0$ and $\varphi_1$, i.e., using \eqref{eq:orig_pi_bricks}.
We obtain
\[
\pi(\bw) =
\letterOrig{0123454}
\ldots
\]
By \Cref{st:orig_u_are_produced,st:orig_w_compatible} we conclude that on the first row of the right side of the previous equation we have exactly $\bu_0$ (with no spaces between letters or overlaps), and that the second row is exactly $\bu_1$.
Moreover, since the first brick is $(a,c,0)$ with offset $0$, the words $\bu_0$ and $\bu_1$ are aligned and their positions match.
Comparing all occurrences of the letter $c$ in $\pi(\L_2(\bw))$ depicted in \eqref{eq:orig_piL2} we conclude that almost all occurrences of $c$ in $\bu_0$ may be paired with occurrences of $c$ in $\bu_1$.
Precisely, for $n > 0$ we have
\begin{align*}
\text{if }\bu_0[n] = c \quad \text{then} \quad \bu_1[n{-}1]\bu_1[n] \text{contains exactly 1 occurrence of } c, \\
\text{if }\bu_1[n] = c \quad \text{then} \quad \bu_0[n]\bu_0[n{+}1] \text{contains exactly 1 occurrence of } c.
\end{align*}
The first letter of $\bu_1$ is $c$ and it is not paired with any $c$ in $\bu_0$.
The relative positions of $c$'s in $\bu_0$ and $\bu_1$ imply
\[
\left | \bu_1[{:}n] \right |_c > \left | \bu_0[{:}n] \right |_c
\]
for all $n > 0$.
\end{proof}

\section{Comments} \label{s:comments}

In this section we discuss the used technique to simultaneously generate two fixed points in a more general view.
This technique was introduced in this article in order to check that there is no initial balanced pair for a pair of fixed points; however the technique is interesting by itself and it could have other potential applications.
Clearly, a crucial step of the technique is to have a simultaneous coding which is a fixed point of a morphism.

We explore here if the same method may be used to answer the same question for some other pair of morphisms.
Since we only intend to make comments and state open questions, we don't give any proofs as they would in the very same spirit as above and we support our claims by computer evidence.

We give two simple examples while keeping the same meaning of the notation:
The sequence $\pi(\bw)$ is the sequence simultaneously encoding the two fixed points and the goal is to find a fixing morphism $\mu$ of $\bw$.
Again, $\C$ and $\pi$ are used only to ease the presentation.

\begin{example} \label{ex:1}
We consider the following two morphisms over $\A = \{a,b\}$:
\[
\varphi_0: \begin{array}{l}
a \mapsto aab\\
b \mapsto ab
\end{array}
\quad \text{ and } \quad
\varphi_1:
\begin{array}{l}
a \mapsto aba\\
b \mapsto ba
\end{array}.
\]

We set $\C=\{0,1,2\}$, $\B = \left\{ (a,b,0),(a,a,-1),(b,a,-1) \right\}$ and $\pi: \C^* \to \B^*$ depicted with respect to $\left( \varphi_0, \varphi_1 \right) $ as follows
\[
\pi: 0 \mapsto \begin{array}{|c|c|c|} \cline{1-3}  \multicolumn{1}{|c}{a} & \multicolumn{1}{c}{a} & \multicolumn{1}{c|}{b} \\  \cline{1-3}  \multicolumn{1}{|c}{b} & \multicolumn{1}{c|}{a} \\  \cline{1-2} \end{array} , \quad
1 \mapsto \begin{array}{|c|c|c|c|} \cline{2-4}  \multicolumn{1}{c|}{} & \multicolumn{1}{|c}{a} & \multicolumn{1}{c}{a} & \multicolumn{1}{c|}{b} \\  \cline{1-4}  \multicolumn{1}{|c}{a} & \multicolumn{1}{c}{b} & \multicolumn{1}{c|}{a} \\  \cline{1-3} \end{array} , \quad
2 \mapsto \begin{array}{|c|c|c|} \cline{2-3}  \multicolumn{1}{c|}{} & \multicolumn{1}{|c}{a} & \multicolumn{1}{c|}{b} \\  \cline{1-3}  \multicolumn{1}{|c}{a} & \multicolumn{1}{c}{b} & \multicolumn{1}{c|}{a} \\  \cline{1-3} \end{array}.
\]
Let $\bw \in \C^\N$ be fixed by the morphism determined by $0 \mapsto 01, 1 \mapsto 201, 2 \mapsto 202$.
The sequence $\pi(\bw)$ simultaneously encodes $\bu_0$ and $\bu_1$.
\end{example}

\begin{example} \label{ex:2}
We consider $\A = \{a,b,c\}$ and the substitutions
\[
\varphi_0: \begin{array}{l}
a \mapsto abac\\
b \mapsto aba\\
c \mapsto ab
\end{array}
\quad \text{ and } \quad
\varphi_1:
\begin{array}{l}
a \mapsto acab\\
b \mapsto aab\\
c \mapsto ab
\end{array}.
\]

We set $\C = \{0,1,\ldots,11\}$ and
\[
\B = \left\{\begin{array}{l}
 (a,a,0),
(b,c,0),
(a,a,-1),
(c,b,-1),
(b,b,0),
(a,c,0),\\
(b,a,-2),
(a,b,-1),
(c,a,-2),
(b,a,0),
(a,b,1),
(c,c,0)
\end{array}
\right\}.
\]

The morphism $\pi: \C^* \to \B^*$ maps the elements of $\C$ to the elements of $\B$ in the order as depicted above, its graphical view with respect to $(\varphi_0,\varphi_1)$ is as follows:
\[
\begin{aligned}
\begin{aligned}
\pi(0) & = \letterzuj{0}, &
\pi(1) & = \letterzuj{1}, &
\pi(2) & = \letterzuj{2}, \\
\pi(3) & = \letterzuj{4}, &
\pi(4) & = \letterzuj{5}, &
\pi(5) & = \letterzuj{6}, \\
\pi(6) & = \letterzuj{6}, &
\pi(7) & = \letterzuj{8}, &
\pi(8) & = \letterzuj{8}, \\
\pi(9) & = \letterzuj{9}, &
\pi(10) & = \letterzuj{10}, &
\pi(11) & = \letterzuj{11}.\\
\end{aligned}
\end{aligned}
\]

The morphism fixing $\bw$ is determined by
\[
\begin{aligned}
0 & \mapsto 0123, &
1 & \mapsto 0, &
2 & \mapsto 405678,\\
3 & \mapsto 04, &
4 & \mapsto 0.9.10, &
5 & \mapsto 0.4.0.11.0.4,\\
6 & \mapsto 0, &
7 & \mapsto 0.4.0.11, &
8 & \mapsto 04\\
9 & \mapsto 0, &
10 & \mapsto 127623, &
11 & \mapsto 04.\\
\end{aligned}
\]
Since the alphabet has more than $10$ letters and we use decimal numbers to represent them, if needed, we use ``.'' to separate the elements of a word in order to avoid confusions.
\end{example}

We would like to point out that \Cref{ex:1,ex:2} trivially have  an initial balanced pair, i.e. $(a,a)$.

In general, this direct approach to simultaneously generate two fixed points does not work for any pair of morphisms.
We do not consider the computational problems, i.e., when the size of $\B$ is large.
The combinatorial problem is that $\bw$ may not be a fixed point.
We continue with examples that displayed this property and discuss possible refinements of the method.

\begin{example}
The pair of morphisms~(\ref{eqn:morphisms})  belongs to the following family
\begin{equation} \label{eqn:morphisms_k}
\varphi_{k,0}: \begin{array}{l}
a \mapsto a^kbc\\
b \mapsto a\\
c \mapsto ac
\end{array}
\quad \text{ and } \quad
\varphi_{k,1}:
\begin{array}{l}
a \mapsto cba^k\\
b \mapsto a\\
c \mapsto ca
\end{array},
\end{equation}
where $a^k=\underbrace{a\cdots a}_{k-\text{times}}$, with $k\geq 1$.

Computer experiments indicate that their respective fixed points do not have an initial balanced pair.
The methods expounded in this note do not work well for $k>1$.
\end{example}

The first refinement is to alternate \Cref{de:brick}.

\begin{definition}[Extended~\Cref{de:brick}]
Let $\A$ be an alphabet and $n$ is a positive integer.
We set $\BA_n(\A) = \A^n \times \A^n \times \Z$ and call it the \emph{brick alphabet of order $n$} (over $\A$).
An element of $\BA_n(\A)$ is called a \emph{brick of order $n$}.  
\end{definition}

Using this notion, the problem for the morphisms~\eqref{eqn:morphisms_k} with $k=2$ is remedied by taking bricks of order $2$.
(The simultaneous encoding using bricks of order $2$ is done so that in the graphical view the bricks overlap by a brick of order $1$, i.e., $\tau_i(\bw) = \bu_i[0]\bu_i[1], \bu_i[1]\bu_i[2], \bu_i[2]\bu_i[3] \ldots \in \left(   \L_2(\bu_i) \right)^\N$.)
We find $\#\B = \#\C = 21$ and $\bw$ is fixed by a morphism.
However, for $k=3$, computer evidence suggests that the sequence $\bw$ is not fixed by any morphism.

In order to generate $\bw$ one might try to check whether it can be generated as a morphic image of a fixed point (so-called \emph{morphic sequence}).
This is a second refinement of the exposed technique.
An intuitive approach is to select a factor of $\bw$, construct a coding of return words to this factor in $\bw$ (usually called a \emph{derivated sequence}, see \cite{Durand98}) or a coding of return words to a selected set of factors, and test whether this coding is a fixed point.
Using these two refinements, we have successfully solved the problem for some random examples.
However, a sufficient condition for any of the techniques to work is an open question.
We state this question along with other open problems:

\begin{enumerate}
\item Primitivity of $\varphi_0$ (and thus of $\varphi_1$) is a sufficient condition for $\big |   \left | \varphi_0(\bu_0[{:}n]) \right | - \left | \varphi_1(\bu_1[{:}n]) \right |  \big |$ to be bounded.
It follows that the simultaneous coding of $\bu_0$ and $\bu_1$ is over a finite alphabet since the offsets of the bricks occurring in it are bounded.
It implies that using bricks of higher orders will also yield a finite alphabet.

To see that in the non-primitive case we may need an infinite number of bricks consider the following example:
\[
\varphi_0: \begin{array}{l}
a \mapsto abacaa\\
b \mapsto cbb\\
c \mapsto bcc
\end{array}
\quad \text{ and } \quad
\varphi_1:
\begin{array}{l}
a \mapsto baacaa\\
b \mapsto bbc\\
c \mapsto bcc
\end{array} .
\]
The incidence matrix of these two morphisms is $ \begin{pmatrix}
4 & 0 & 0 \\
1 & 2 & 1 \\
1 & 1 & 2
\end{pmatrix}$, its largest eigenvalue is $4$ and the associated eigenvector is $(1,1,1)^T$.
The other eigenvalues are $3$ and $1$ with respective eigenvectors $(0,1,1)^T$ and $(0,1,-1)$.
It follows that the order of growth of $\left | \varphi_0^n ( a ) \right |$ is $4^n$,
while the order of growth of $\left | \varphi_1^n ( b ) \right |$ is $3^n$.
Therefore, the simultaneous coding of the fixed points of these two morphisms requires an infinite number of bricks.
Clearly, this property is due to the choice of distinct first letters of the compared fixed points $a$ for $\varphi_0$ and $b$ for $\varphi_1$ which have distinct magnitude of growth.

A further investigation of conditions for the finiteness of the number of bricks needed is an open questions along with an estimate on this number.
\item Assuming the number of required bricks for a simultaneous coding is finite, one can be interested in an integer $N$ such that all required bricks can be found using the words $\varphi_0^N(\bu_0[0])$ and $\varphi_1^N(\bu_1[0])$.
\item What are some sufficient conditions so that a simultaneous coding $\bw$ (using bricks of order $h$) is a fixed point or a morphic image of a fixed point?
\item There is an overlap of our presented algorithm with the approach used in~\cite{sellami} based on the balanced pair algorithm, recalled in Introduction.
This approach identifies minimal balanced pairs which can be seen as sequences of various lengths of bricks of order $1$, always ending with a brick with offset $0$, or as bricks of various orders with offset $0$.
 A description of the relation of the two algorithms is an open question.
For instance, if one terminates, does the other also terminate?

In~\cite{sellami}, the author investigates specific pairs of substitutions (unimodular irreductible Pisot substitutions satisfying the Pisot conjecture and having $0$ in the interior of their Rauzy fractal, see~\cite{sellami,Aper}) for which the approach based on minimal balanced pairs terminates.
In Section 5 of~\cite{sellami}, there are the two following examples.
The first one consists of morphisms determined by $a \mapsto aba, b \mapsto ab$ and $a \mapsto aab, b \mapsto ba$.
Our algorithm terminates by finding a morphism fixing the simultaneous coding in question, which is over $6$ bricks of order $1$.
The second example is formed by morphisms determined by $a \mapsto ab,b \mapsto ac,c \mapsto a$ and $a \mapsto ab,b \mapsto ca,c \mapsto a$.
In this case, using bricks of order $1$, the simultaneous coding seems not to be fixed by a morphism.
This observation supports the need to explore this question further.
\end{enumerate}

\section*{Acknowledgements}

ŠS acknowledges financial support by the Czech Science Foundation grant GA\v CR 13-03538S.
Computer experiments were performed using SageMath~\cite{sage}.


\end{document}